\theoremstyle{plain}
\newtheorem{definition}{Definition}
\newtheorem{theorem}{Theorem}
\newtheorem{question}{Question}
\newtheoremstyle{derp}
{3pt}
{3pt}
{}
{}
{\upshape}
{:}
{.5em}
{}
\theoremstyle{derp}
\newcommand{\R}{\mathbb{R}}
\newcommand{\Z}{\mathbb{Z}}
\newcommand{\N}{\mathbb{N}}
\newcommand{\M}{\mathbb{M}}
\newcommand{\ID}{\mathrm{id}}
\newcommand{\MCG}{\mathrm{MCG}}
\title{Veelike actions and the MCG of a mixing SFT}
\author{
Ville Salo \\
vosalo@utu.fi
}
\begin{document}
\maketitle

\begin{abstract}
We embed Thompson's group $V$ in the mapping class group of a mixing subshift of finite type. Question~6.3 in [Boyle-Chuysurichay, 17] asks whether these mapping class groups are sofic. Our result suggests that this question is difficult to solve at present (at least for some mixing SFTs), since a resolution of it in either direction would solve an open problem in geometric group theory. More generally, we define the notion of a ``veelike action'', and prove that whenever a group acts veelike on a language, it embeds in the mapping class group of some subshift. We show that Thompson's $V$ acts veelike on a locally testable language, and thus it embeds in the mapping class group of a mixing SFT. A two-sided variant of the argument works for the Brin-Thompson group $2V$.
\end{abstract}

\section{Introduction}

The mapping class group of a subshift was introduced in \cite{Bo02a}, and studied extensively in \cite{BoCh17} (also see \cite{ScYa21}). By embedding suitable groups as subgroups, it was in particular shown that this group is non-amenable and not residually finite. It was asked in \cite[Question~6.3]{BoCh17} whether this group is sofic. We show that solving this question would necessarily solve an open problem: If the group is shown sofic, then so is Thompson's group $V$ (and even the Brin-Thompson group $2V$), which is open. If the group is shown non-sofic, then in particular there is a non-sofic group, which is open. The embedding also shows that the mapping class group of a mixing SFT is not locally embeddable in finite groups, since $V$ is not (since it is finitely presented, infinite and simple).

\begin{theorem}
\label{thm:V}
Thompson's group $V$ embeds in the mapping class group of the vertex shift defined by the matrix $\left[\begin{smallmatrix}
1 & 1 & 0 \\
1 & 1 & 1 \\
1 & 1 & 1 \\
\end{smallmatrix}\right]$.
\end{theorem}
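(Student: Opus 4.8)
The plan is to derive Theorem~\ref{thm:V} from the two general facts summarised in the abstract: (a) any veelike action of a group $G$ on a language embeds $G$ into the mapping class group of the associated subshift, and (b) a locally testable language determines a subshift of finite type. Granting these, the specific statement reduces to two concrete tasks: first, to equip $V$ with an explicit veelike action on a locally testable language $L$; and second, to recognise the subshift $X_L$ cut out by $L$ as exactly the vertex shift of the displayed $3\times 3$ matrix and to check that this vertex shift is mixing. So I would not re-prove the embedding machinery, but instead pin down the concrete model that makes the output SFT this particular small one.

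For the construction I would work with the standard presentation of $V$ as the group of prefix-rearranging bijections of the binary tree: an element is a pair of complete finite prefix codes over $\{0,1\}$ together with a bijection between their leaves, acting on Cantor space by the rule $u_i w \mapsto v_{\pi(i)} w$. The task is to encode this tree/prefix data into finite words over the three letters $\{1,2,3\}$ so that the legal words are precisely those avoiding the single length-two factor $13$ (the only forbidden transition, since the lone zero of the matrix is the $(1,3)$ entry). With such an encoding in hand, each tree-pair diagram induces a local prefix-rewriting rule on encoded configurations, and I would verify that the resulting assignment is a veelike action: that it is determined by a bounded window of symbols, that it respects the language $L$, and --- the feature that forces us into the mapping class group rather than the automorphism group --- that it is allowed to be \emph{asynchronous}, changing the length of the block it rewrites, exactly as $V$ maps a cylinder of one depth to a cylinder of another depth.

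With the action in place the remaining steps are routine. The forbidden-word description shows immediately that $X_L$ is the vertex shift of the matrix, since avoiding $13$ is the same as allowing every transition except $1 \to 3$. Mixing follows from primitivity of the matrix; concretely $A^2$ has all entries positive, and a vertex shift is mixing precisely when its defining matrix is primitive. Faithfulness of the induced homomorphism $V \to \MCG(X_L)$ is the one point I would check by hand if it is not already packaged into the embedding theorem: since $V$ is simple, it suffices to produce a single nontrivial element whose image is nontrivial, which one reads off from the action on a suitable (eventually) periodic configuration.

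I expect the main obstacle to be the construction in the second paragraph rather than the verifications in the third. The difficulty is a tension between two competing demands: the asynchronous, arbitrarily deep prefix rewrites of $V$ naturally want a large alphabet and long rewriting windows, whereas the theorem insists that the ambient SFT be this particular three-symbol shift defined by a single forbidden word. Threading the encoding so that the $V$-action becomes veelike \emph{and} the induced language is exactly ``avoid $13$'' --- in particular arranging that the one forbidden transition $1 \to 3$ is precisely the syntactic constraint enforcing valid encoded tree data --- is where the real work lies; the locally testable and mixing properties are then essentially bookkeeping.
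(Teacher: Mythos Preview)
Your overall plan matches the paper's exactly: Theorem~\ref{thm:V} is obtained by combining the general embedding theorem for veelike actions (your step (a)) with an explicit veelike action of $V$ on a locally $2$-testable language, and then reading off that the resulting subshift is the given vertex shift. Your remarks on mixing and on faithfulness via simplicity of $V$ are also fine.

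The gap is in your second paragraph: you have not actually produced the veelike action, and your description of what you would try is slightly off in a way that makes the ``main obstacle'' look harder than it is. You speak of encoding the tree/prefix data ``into finite words over the three letters $\{1,2,3\}$'' and of arranging that the forbidden transition $1\to 3$ is ``precisely the syntactic constraint enforcing valid encoded tree data''. But in the paper's framework the language $L$ on which $V$ acts veelike is over a \emph{two}-letter alphabet; the third symbol is the separator $\#$ that the embedding theorem adjoins when forming the subshift $X$. There is no tree data to encode. The paper takes $L=\epsilon+(0+1)^*1$ and identifies it with the finitely-supported points of $\{0,1\}^\N$ via $w\mapsto w0^\N$; conjugating the natural $V$-action through this bijection is veelike essentially by the definition of $V$ (an element with local rule $F:\{0,1\}^n\to\{0,1\}^*$ sends $uv1\mapsto F(u)v1$ for $|u|=n$). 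The single forbidden transition then drops out for free: nonempty words of $L$ end in $1$, so in $X$ the only disallowed length-$2$ word is $0\#$, i.e.\ $02$ after setting $\#=2$. Once you see that the language is binary and the third letter is just the separator, the ``threading'' you anticipate disappears and the identification with the displayed matrix is immediate.
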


This is equivalently the subshift $X \subset \{0,1,2\}^\Z$ with the single forbidden pattern $02$.

\begin{theorem}
\label{thm:2V}
The Brin-Thompson group $2V$ embeds in the mapping class group of the vertex shift defined by the matrix $\left[\begin{smallmatrix}
      1 & 1 & 1 & 0 & 0 & 0 \\
      1 & 1 & 1 & 0 & 0 & 0 \\
      0 & 0 & 0 & 1 & 1 & 1 \\
      0 & 0 & 0 & 1 & 1 & 0 \\
      0 & 0 & 0 & 1 & 1 & 1 \\
      0 & 1 & 1 & 0 & 0 & 0 \\
\end{smallmatrix}\right]$.
\end{theorem}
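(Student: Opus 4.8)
The plan is to prove Theorem~\ref{thm:2V} by mimicking the one-sided construction behind Theorem~\ref{thm:V}, but replacing the single-axis ``veelike'' action of $V$ with the two-axis action of the Brin--Thompson group $2V$. Recall that $2V$ acts on the Cantor set $\{0,1\}^{\N}\times\{0,1\}^{\N}$ by prefix-replacement on two independent coordinates simultaneously: an element is specified by a pair of finite dyadic partitions of the square $\{0,1\}^{\N}\times\{0,1\}^{\N}$ into cylinder rectangles together with a bijection between the two partitions that acts by relabeling prefixes. I expect that the abstract machinery (a group acting veelike on a language embeds in the MCG of the associated subshift) has been set up earlier precisely so that it is axis-agnostic, so the main task is to exhibit a \emph{two-dimensional} veelike action of $2V$ on a suitable language and check that the target subshift is the specific vertex shift displayed in the statement.

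First I would identify the two ``tracks'' in the alphabet $\{1,\dots,6\}$ of the given matrix: the block structure of the matrix (the $\{1,2,3\}$ block and the $\{4,5,6\}$ block, with the lone off-block entries in row~6 and in the $(3,4)$/$(5,6)$ positions) should encode the gluing of two copies of the one-sided substrate used for $V$, where transitions between the two blocks serve as the two independent dyadic directions that $2V$ needs. Concretely, I would read off the allowed transitions and verify that the subshift decomposes into the data of two commuting prefix-replacement structures, one per block, so that a pair of dyadic rewriting trees can be realized by finite-range recoding maps. The bulk of the work is to define, for each generator of $2V$ (equivalently for each elementary pair of dyadic subdivisions), an explicit element of the MCG—i.e.\ a flow-equivalence-respecting self-homeomorphism of the mapping torus—and to check that these assemble into a homomorphism from $2V$.

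The key steps, in order, are: (i) verify that the displayed matrix is mixing, by exhibiting an $N$ such that $A^N$ is strictly positive, and check that it realizes the ``two-copies-glued'' substrate I want; (ii) construct the two-sided veelike action, i.e.\ specify how a pair of dyadic partitions is encoded as a clopen partition of the subshift respecting both block-transition directions, and how the $2V$ relabeling is implemented by a homeomorphism of the associated space; (iii) invoke the earlier embedding result to conclude that this veelike action yields an embedding of $2V$ into the MCG of the subshift; and (iv) confirm injectivity, which should follow because $2V$ is simple, so any nontrivial homomorphism is automatically injective—this lets me bypass a direct kernel computation entirely.

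The hard part will be step (ii): making the two dyadic directions genuinely \emph{independent} within a single one-dimensional subshift. In the $V$ case one axis suffices and the prefix-rewriting is clearly realizable, but for $2V$ I must encode an entire two-dimensional dyadic square inside the temporal structure of a one-dimensional SFT while keeping the two subdivision operations commuting and individually implementable by bounded-range maps. I expect the block decomposition of the matrix is engineered exactly so that each block carries one axis and the cross-block transitions carry the other, but verifying that the resulting action is truly veelike (that the recodings are continuous, shift-commuting up to the MCG relation, and that they respect flow equivalence) will require the most care; the remaining verifications, including mixing and the homomorphism property, I expect to be routine given the framework established for Theorem~\ref{thm:V}.
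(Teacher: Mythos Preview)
Your high-level instinct is right—this is a two-axis analogue of the $V$ argument—but your reading of the matrix and your plan for step~(ii) are off in a way that would make the ``hard part'' much harder than it needs to be.

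The six symbols are not organized as ``two blocks, with cross-block transitions carrying the second axis.'' Instead (indexing rows/columns as $0_A,1_A,@,0_B,1_B,\#$) the symbols $@$ and $\#$ are pure separators, and the two blocks $\{0_A,1_A\}$ and $\{0_B,1_B\}$ are disjoint alphabets for the \emph{two coordinates} of a pair of words. A typical configuration is
\[
\cdots\,\#\,u_i^{R}\,@\,v_i\,\#\,u_{i+1}^{R}\,@\,v_{i+1}\,\#\,\cdots,\qquad (u_i,v_i)\in L=(\epsilon+(0+1)^*1)^2,
\]
with the first word \emph{reversed} so that, reading outward from the anchor $@$, one sees the prefixes of both $u_i$ and $v_i$. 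The asymmetries you noticed (row~$4$ forbidding $\#$, row~$6$ forbidding $0_A$) simply encode that each nonempty word in $L$ ends in $1$. So the two dyadic directions are automatically independent: they live on opposite sides of $@$ and never interact symbolically. There is nothing to engineer here.

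Correspondingly, you cannot ``invoke the earlier embedding result'' as stated: the single-language theorem (veelike action on $L\subset A^*$ gives an embedding) does not apply, because $2V$ is not acting on a language of single words. The paper instead sets up a \emph{pair-language} notion of veelike action on $L\subset A^*\times B^*$ and proves the obvious analogue of the embedding theorem, with $@$ playing the role of the anchor and rewrites happening on both sides of it simultaneously. Once that is in place, the proof that $2V$ acts veelike on $(\epsilon+(0+1)^*1)^2$ is a line-for-line copy of the $V$ case applied coordinatewise, and identifying the resulting subshift with the displayed vertex shift is bookkeeping. Your use of simplicity of $2V$ to get injectivity for free is a perfectly valid shortcut, though faithfulness also follows directly by looking at periodic orbits $\#u^R@v$.
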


\section{Definitions}

We use standard terminology and notation for words and (regular) languages. The \emph{empty word} is $\epsilon$. See \cite{LiMa95} for definitions of SFTs and vertex shifts. Briefly, a \emph{subshift} is a set of bi-infinite words (or \emph{points} or \emph{configurations}) defined by a set of forbidden finite subwords, an \emph{SFT} is one where this set is finite, and a \emph{vertex shift} is one where the words all have length $2$. A vertex shift can be seen as the set of bi-infinite paths in a graph (which we represent by its adjacency matrix). The \emph{language} of a subshift is the set of finite subwords of its configurations. An SFT is \emph{mixing} if for some $n$, for any two $u, v$ that appear in its language, also $uwv$ does, for some word $w$ with $|w| = n$.

The following is Thompson's group $V$.

\begin{definition}
\label{def:V}
Thompson's group $V$ is the group of homeomorphisms $f : \{0,1\}^\N \to \{0,1\}^\N$ such that for some $n \in \N$, there exists a function $F : \{0,1\}^n \to \{0,1\}^*$ such that $f(ux) = F(u)x$ for all $u \in \{0,1\}^n, x \in \{0,1\}^\N$.
\end{definition}

The following is the Brin-Thompson $2V$ \cite{Br04a}.

\begin{definition}
\label{def:2V}
The Brin-Thompson group $2V$ is the group of homeomorphisms $f : (\{0,1\}^\N)^2 \to (\{0,1\}^\N)^2$ such that for some $n \in \N$, there exist functions $F_i : (\{0,1\}^n)^2 \to \{0,1\}^*$ such that $f((ux, vy)) = (F_1(u, v)x, F_2(u, v)y)$ for all $u,v \in \{0,1\}^n, x,y \in \{0,1\}^\N$.
\end{definition}

Let $X$ be a subshift. The \emph{mapping torus} $\mathbf{S}X$ is the quotient of $X \times \R$ by the equivalence relation $\cong$ defined by $(x, t) \cong (\sigma^n(x), t-n)$ for each $x \in X$, $n \in \Z$. Write $[x, t]$ for the equivalence class of $(x,t)$. The path components of this space are the orbits of the $\R$-flow $r + [x, t] = [x, t+r]$, i.e.\ the equivalence classes of the sets $\{x\} \times \R$. Self-homeomorphisms of $\mathbf{S}X$ map orbits onto orbits. Just like in the case of subshifts, points of the mapping torus are called \emph{points} or \emph{configurations}.

Write $\mathcal{F}(X)$ for the subgroup of homeomorphisms which preserve the natural orientation of each orbit. For $f,g \in \mathcal{F}(X)$ say $f$ and $g$ are \emph{isotopic} if there exists a continuous map $h: [0,1] \times \mathbf{S}X \to \mathbf{S}X$ such that $f(x) = h(0,x)$ and $g(x) = h(1,x)$ for all $x \in \mathbf{S}X$, and $x \mapsto h(t,x)$ is in $\mathcal {F}(X)$ for all $t \in [0,1]$. The following is the Boyle-Chuysurichay mapping class group \cite{Bo02a,BoCh17}.

\begin{definition}
Let $X$ be a subshift. The \emph{mapping class group} $\MCG(X)$ is the quotient of the group $\mathcal{F}(X)$ under isotopy.
\end{definition}

We think of an element of the mapping torus as a concatenation (without gaps) of length-$1$ intervals labeled with a symbol from the alphabet, and the flow is the shift map on such unions. The word \emph{tile} refers to a tuple of consecutive such intervals. Concretely, tiles represent (finite subintervals of) paths in the mapping torus, whose labels traverse the corresponding cylinder in the subshift, or more abstractly the thicket of all paths traversing this cylinder. In our proofs, we refer to tiles by simply writing the word read from the labels of the intervals. When thinking of words like this, we refer to their subwords (with positions) as \emph{pieces}, with the understanding that the ``piece'' $v$ in $uvw$ refers to the geometric subtile corresponding to the word $v$, starting at coordinate $|u|$, even if $v$ appears multiple times in $uvw$.

In practice, we define elements of the mapping class group by describing the ``rewrites'' that are performed in a configuration of the mapping torus. We use a discrete rule to split a configuration into a concatenation of tiles, and then use a rule to determine what tiles we replace them by. If the replacements use words of different lengths, we interpolate linearly, i.e.\ the flow over a tile is decelerated or accelerated on the image side; formally, this simply means that the cocycle of the mapping class group element takes on nontrivial values. We refer to this as \emph{distortion}. See \cite{BoCh17} and its references for more details on discretization of mapping class groups.

\section{Veelike actions}

\begin{definition}
Let $L \subset A^*$ be a language and $G$ a group. An action $G \curvearrowright L$ is \emph{veelike} (or $G$ acts veelike) if for all $g \in G$ there exists $n \in \N$ such that for all $u \in A^*$ with $|u| = n$ there exists $u' \in A^*$ such that $guv=u'v$ for all $uv \in L$.
\end{definition}

This description determines a function $F : A^n \to A^*$ called the \emph{local rule} of $g$.

\begin{theorem}
\label{thm:Veelike}
Suppose $G$ admits a faithful veelike action on $L \subset A^*$. Let $X \subset (A \cup \{\#\})^\Z$ be the smallest subshift containing every configuration of the form
\[ \ldots w_{-2} \# w_{-1} \# w_0 \# w_1 \# w_2 \# \ldots \]
with $w_i \in L$. Then $G$ embeds in $\MCG(X)$.
\end{theorem}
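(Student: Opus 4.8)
The plan is to convert the local rule of each $g\in G$ into a $\#$-anchored rewrite of the mapping torus, check that $g\mapsto[\phi_g]$ is a homomorphism into $\MCG(X)$, and detect nontriviality on periodic orbits.

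\textbf{Constructing $\phi_g$.} Fix $g\in G$ with local rule $F\colon A^{n}\to A^{*}$, so $g(uv)=F(u)v$ whenever $|u|=n$ and $uv\in L$. I would use the occurrences of $\#$ as the discrete splitting rule: after each $\#$, read the maximal block of $A$-symbols up to the next $\#$ or to infinity. If the block $w=uv$ has length $\ge n$, rewrite its length-$n$ prefix $u$ to $F(u)$ and keep the suffix, replacing $w$ by $gw=F(u)v$. A block $w$ of length $<n$ lies entirely inside the window of size $n+1$ following its preceding $\#$, so I replace it by $gw$ using the bijection $g\colon L\to L$; since $\#\notin A$, an interior $\#$ in that window unambiguously signals this short case. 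Left-infinite tails carrying no preceding $\#$ are left unchanged, and right-infinite tails are rewritten from their prefix as long blocks. In every case the rewrite of the material following a given $\#$ depends only on the next $n+1$ symbols, giving a single bounded-window, shift-equivariant symbolic rewrite; it descends to a self-map $\phi_g$ of $\mathbf{S}X$ once the length changes are absorbed by distortion (linear reparametrization of the flow over each rewritten tile). Each output block $gw$ again lies in $L$, so the image stays in $X$; continuity follows from the bounded window; the flow orientation is preserved; and $\phi_{g^{-1}}$ is a two-sided inverse, so $\phi_g\in\mathcal{F}(X)$.

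\textbf{Homomorphism.} I would show $[\phi_g][\phi_h]=[\phi_{gh}]$. Symbolically $\phi_g\circ\phi_h$ sends each block $w$ to $g(hw)=(gh)w$ and fixes the $\#$'s and the untouched left tails, so it performs exactly the same symbolic rewrite as $\phi_{gh}$ (the two read prefixes of a priori different lengths but produce the same output configuration on every point of $X$). Two elements of $\mathcal{F}(X)$ inducing the same symbolic rewrite differ only in their distortion cocycle, and interpolating the cocycle linearly gives an isotopy between them; hence $\phi_g\phi_h\simeq\phi_{gh}$ and $g\mapsto[\phi_g]$ is a homomorphism.

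\textbf{Faithfulness.} For $u\in L$ the periodic point $(\#u)^{\Z}$ lies in $X$, and I write $O_u\subset\mathbf{S}X$ for its closed flow orbit. Since $\#$ occurs exactly once per period and $L\subseteq A^{*}$, the cyclic word $\#u$ is recovered from $O_u$, so $O_u=O_{u'}$ iff $u=u'$, and by construction $\phi_g(O_u)=O_{gu}$. The key input is that the permutation a map in $\mathcal{F}(X)$ induces on the set of periodic (equivalently, compact) flow orbits descends to $\MCG(X)$, i.e.\ isotopic maps permute periodic orbits identically. Granting this, $[\phi_g]=[\phi_h]$ forces $O_{gu}=O_{hu}$, hence $gu=hu$, for every $u\in L$; faithfulness of $G\curvearrowright L$ then gives $g=h$, so the homomorphism is injective and $G\hookrightarrow\MCG(X)$.

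\textbf{Main obstacle.} The delicate step is the isotopy invariance of the periodic-orbit permutation. A homeomorphism in $\mathcal{F}(X)$ carries compact orbits to compact orbits, so it does permute the $O_u$; the work is to rule out that an isotopy $h_s$ changes this permutation. I would argue that $s\mapsto h_s(O_u)$ is a continuous path of periodic orbits and that the associated necklace — the cyclic word, read off through the clopen $\#$-locus and the transverse structure of the flow — is a locally constant function of $s$ into a discrete set, hence equal to that of $h_0(O_u)$ throughout; distortion alters the geometric length of an orbit but not its combinatorial necklace, which is exactly what makes the invariant survive. Verifying this local constancy carefully, in particular that the number of $\#$-crossings of $h_s(O_u)$ cannot jump, is the part of the argument demanding the most care.
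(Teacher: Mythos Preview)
Your proof is correct and follows essentially the same approach as the paper's: build $\hat g$ (your $\phi_g$) by a $\#$-anchored local rewrite with linear distortion, verify the homomorphism by showing that a relation in $G$ yields a map that is symbolically the identity and then isotoping away the residual distortion, and detect faithfulness on the periodic flow orbits $(\#u)^\Z$.

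The one place you are more careful than the paper is your ``main obstacle'': the paper simply asserts that $\hat G$ simulates the $G$-action on the periodic orbits $(\#u)^\Z$ and calls this faithfulness, without explicitly justifying that the induced permutation of periodic orbits is an isotopy invariant. Your necklace argument works, but there is a quicker route: the periodic flow orbits form a \emph{countable} subset of the Hausdorff hyperspace of $\mathbf{S}X$, hence a countable metric space, hence totally disconnected; since $s\mapsto h_s(O_u)$ is continuous from $[0,1]$ into this set, its image is a single point. So the number-of-$\#$-crossings issue you flag cannot actually arise, and the step you label as delicate is in fact routine.
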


\begin{proof}
Let $g \in G$, and $F : A^n \to A^*$ be the local rule for some $n \in \N$. For all $u \in A^*$ with $|u| \leq n$, inside every block $\#u\#$ map the piece $\#u$ onto a word $\# g \cdot u$, stretching linearly. The leftmost point of the leftmost $\#$-piece is called an \emph{anchor}. Configurations where the anchor is at the origin are not fixed (no non-trivial mapping class group element on $X$ fixes an open set if $L$ is infinite), but intuitively we think of these positions as fixed: rewrites do not touch these positions, and the linear distortion in the flow from rewrites with different word lengths does not affect them either.

For $uv \in A^*$ with $\#uv$ where $|u| = n$, inside occurrences of $\#uv\#$ map the piece $\#u$ linearly onto $\# (g  \cdot u)$. Note that $u$ or $g \cdot u$ may be the empty word, but this poses no problem, as the words $\# u$ and $\# (g \cdot u)$ nevertheless have positive length. Finally, fix all $A$-symbols to the left of which no anchor appears in the preceding $n+1$ steps in the flow. This rule determines a mapping class group element $\hat g \in \mathcal{F}(X)$

This mapping is clearly bijective on the flow orbits: to the right of any $\#$ we exactly simulate the action of $f$ (and when $\#$s do not appear on the left, the configuration is fixed). The map $g \mapsto \hat g$ is faithful because on the periodic flow orbit with repeating pattern $\# u$, $u \in A^*$, the action of $\hat G$ simulates the orbit of $u$ in the action of $G$. It is a homomorphism because if $g_n \circ \cdots \circ g_1 = \ID_L$ then $\hat g_n \circ \cdots \circ \hat g_1$ performs, on the level of symbols, the identity transformation between any two $\#$s (because the $V$-action on finite-support configurations cancels) and also to the right of every rightmost $\#$-symbol.

There can be a leftover distortion in the flow, but we can use the occurrences of the anchors to indeed ``anchor'' an isotopy from $\hat g_n \circ \cdots \circ \hat g_1$ to the identity map, i.e.\ our flow can act independently in the segment to the right of an anchor (up to the next anchor or until we reach content that was not reached by the rewrites), and we simply observe that the set of self-homeomorphisms of the interval $[0,r]$ fixing $0$ is path connected.
\end{proof} 

In the case where $X$ happens to be a mixing SFT (like in our application), one need not construct the isotopy explicitly, as it is known that any mapping class group element that fixes all flow orbits is isotopic to the identity \cite[Theorem~3.19]{BoCh17}, and it is much easier to see that flow orbits are fixed than to write out the explicit formulas for the isotopy.

\begin{theorem}
\label{thm:VlikeV}
Thompson's group V admits a faithful veelike action on the regular language $L = \epsilon + (0 + 1)^*1$.
\end{theorem}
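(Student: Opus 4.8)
The plan is to realize $L$ as the set of ``dyadic'' (eventually-$0$) points of Cantor space, on which $V$ already acts by its defining prefix-replacement, and then to verify that the transported action is veelike. Write $E \subset \{0,1\}^\N$ for the set of sequences that are eventually $0$. The map $w \mapsto w0^\infty$ is a bijection $L \to E$: every point of $E$ has a unique representation $w0^\infty$ with $w$ not ending in $0$, and ``not ending in $0$'' is exactly the membership condition for $L = \epsilon + (0+1)^*1$ (with $\epsilon \mapsto 0^\infty$). I would take this bijection as the definition of the $V$-set structure on $L$.

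First I would check that $V$ preserves $E$. If $g \in V$ satisfies $g(ux) = F(u)x$ for all $u$ with $|u|=m$, then splitting $x = ux'$ we get $g(x) = F(u)x'$, which is eventually $0$ whenever $x$ is; applying the same to $g^{-1} \in V$ gives $g(E) = E$. Hence each $g$ restricts to a bijection of $E$, and transporting along $w \mapsto w0^\infty$ yields a genuine action $V \curvearrowright L$: concretely, $g \cdot w$ is the unique element of $L$ with $(g\cdot w)\,0^\infty = g(w0^\infty)$. Faithfulness is then immediate from density: $E$ is dense in $\{0,1\}^\N$ (each cylinder $[u]$ contains $u0^\infty$) and $V$ acts by homeomorphisms, so any element fixing $E$ pointwise fixes all of $\{0,1\}^\N$ by continuity, hence is trivial.

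The delicate point, and where the choice of $L$ really earns its keep, is the veelike property. The tempting choice $n = m$ fails at the boundary: for $w = uv \in L$ with $|u|=m$ and $v \neq \epsilon$ one computes $g(w0^\infty) = F(u)\,v\,0^\infty$, and since $v$ ends in $1$ no trailing zeros are stripped, forcing the clean rule $u' = F(u)$; but for the lone word $w = u$ of length exactly $m$ one would have to strip trailing zeros off $F(u)$, which can disagree with $F(u)$. The remedy is to take $n = m+1$. Writing $u = u_1u_2$ with $|u_1| = m$ and $|u_2| = 1$, set $u' = F(u_1)u_2$; then for every $uv \in L$,
\[ g\bigl((uv)0^\infty\bigr) = g\bigl(u_1\cdot u_2 v\,0^\infty\bigr) = F(u_1)\,u_2 v\,0^\infty = (u'v)\,0^\infty, \]
and $u'v$ ends in $1$ in every case (if $v \neq \epsilon$ then $v$ ends in $1$; if $v = \epsilon$ then $uv = u \in L$, so $u$ and hence $u_2$ ends in $1$). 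Thus $u'v \in L$ is the correct representative and $g\cdot(uv) = u'v$ with $u' = F(u_1)u_2$ depending only on $u$, which is exactly veelikeness.

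I expect this last step to be the only real obstacle: the substance of the argument is isolating the empty-/short-suffix case, where stripping trailing zeros threatens to spoil clean prefix replacement, and observing that demanding all words end in $1$ is precisely the device that tames it. Everything else reduces to unwinding the bijection $L \cong E$ and to the tautological faithfulness of $V$ as a homeomorphism group together with density.
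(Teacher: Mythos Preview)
Your proof is correct and follows essentially the same route as the paper: identify $L$ with the eventually-zero sequences via $w \mapsto w0^\infty$, transport the $V$-action, and read off veelikeness from the prefix rule $F$. Your handling of the boundary case (taking $n=m+1$ and $u'=F(u_1)u_2$ so that the representative always ends in $1$) is in fact more careful than the paper's, which writes $w=uv1$ with $|u|=n$ and sets $u'=F(u)$ while only explicitly treating $|w|\geq n+1$.
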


\begin{proof}
Let $X_0 = \{x \in \{0,1\}^\N \;|\; \sum_i x_i < \infty\}$. Define a map $\phi : L \mapsto X_0$ by
\[ \phi(\epsilon) = 0^\M, \;\; \phi(w) = w 0^\N, \]
and observe that this is a bijection. The formula $f \cdot w = \phi^{-1}(f(\phi(w)))$ defines an action of V on $L$. This action is veelike: suppose $f \in V$, and let $n \in \N$ and $F$ be as in Definition~\ref{def:V}. If $|w| \geq n+1$ and $w \in L$ then $w = uv1$ for some $|u| = n$. Then $f \cdot w = \phi^{-1}(f(uv10^\N)) = \phi^{-1}(F(u)v10^\N) = F(u)v1$. Thus, picking the same $n$ and $u' = F(u)$ for all $u \in \{0,1\}^n$, we obtain that the action is veelike. Faithfulness is obvious.
\end{proof}

\section{Veelike actions on pair languages}

With only small notational changes, we can  generalize the previous section to ``pair languages'', i.e.\ languages of pairs of words.

A \emph{pair language} is a subset of the \emph{word pairs} $A^* \times B^*$, where $A$ and $B$ are finite alphabets. Let $\alpha_n : A^* \times B^* \to A^* \times B^*$ be the prefix extraction map $\alpha_n(ux, vy) = (u, v)$ where $|u| \leq n \wedge (|u| = n \vee x = \epsilon)$, and $|v| \leq n \wedge (|v| = n \vee y = \epsilon)$. Write $A^{\leq n} = \{w \in A^* \;|\; |w| \leq n\}$. We have $\alpha_n(A^* \times B^*) = A^{\leq n} \times B^{\leq n}$. Concatenation of words pairs is performed coordinatewise. Define $\omega_n : A^* \times B^* \to A^* \times B^*$ by taking the tails after removing the $\alpha_n$-prefix, i.e.\ $\omega_n(u, v) = (u',v') \iff \alpha_n(u, v) \cdot (u', v') = (u,v)$.

\begin{definition}
Let $L \subset A^* \times B^*$ be a pair language and $G$ a group. An action $G \curvearrowright L$ is \emph{veelike} (or $G$ acts veelike) if for all $g \in G$ there exists $n \in \N$ and a \emph{local rule} $F : L \cap (A^{\leq n} \times B^{\leq n}) \to A^* \times B^*$ such that $g \cdot (u, v) = F(\alpha_n(u, v)) \cdot \omega_n(u)$, where by $F(\alpha_n(u, v))$ we mean the two components of $\psi_n(u, v)$ are given as parameters to $F$.
\end{definition}

Denote by $u^R$ the reversal of words, i.e.\ the function satisfying $a^R = a$ for $|a| \leq 1$ and $(u \cdot v)^R = v^R \cdot u^R$ for all words $u, v$.

\begin{theorem}
\label{thm:Veelike2}
Suppose $G$ admits a faithful veelike action on a pair language $L \subset A^* \times B^*$ where $A \cap B = \emptyset$. Let $X \subset (A \cup B \cup \{\#, @\})^\Z$ be the smallest subshift containing every configuration of the form
\[ \ldots u_{-1} @ v_{-1} \# u_0 @ v_0 \# u_1 @ v_1 \# u_2 @ v_2 \ldots \]
with $(u_i^R, v_i) \in L$, where $(u_i, v_i) \in L$. Then $G$ embeds in $\MCG(X)$.
\end{theorem}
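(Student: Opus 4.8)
The plan is to mimic the proof of Theorem~\ref{thm:Veelike} almost verbatim, with the symbol $@$ playing the role of a separator internal to each block and $\#$ separating consecutive blocks, so that each maximal piece of the form $u @ v$ between two $\#$-symbols encodes a word pair $(u^R, v) \in L$. The reversal on the first coordinate is the crucial bookkeeping device: reading left-to-right across $u @ v$, the $A$-letters of $u$ appear in reversed order as we approach the central $@$, so that the $A$-prefix extracted by $\alpha_n$ on the pair-language side corresponds to the letters immediately to the \emph{right} of $\#$ (equivalently, immediately to the left of $@$, after reversal), while the $B$-prefix corresponds to the letters immediately to the right of $@$. Thus both prefixes that the local rule $F$ consumes live adjacent to the two marker symbols, and the rewrite can be localized near $@$ and near $\#$ simultaneously.

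Concretely, first I would fix $g \in G$ and take $n$ and the local rule $F : L \cap (A^{\leq n} \times B^{\leq n}) \to A^* \times B^*$ from the definition of a veelike action. Inside each block $\# u @ v \#$ I extract $\alpha_n(u^R, v) = (a, b)$, i.e.\ the length-($\leq n$) prefix $a$ of $u^R$ (the innermost $A$-letters, nearest $@$) and the length-($\leq n$) prefix $b$ of $v$ (the letters just right of $@$), and replace the piece reading $a^R @ b$ by $(F(a,b))^{R}_{\text{first}} @ (F(a,b))_{\text{second}}$, where I apply reversal to the first component of the output so that it is written in the correct orientation in the configuration, and stretch linearly. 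The $\#$-markers and the letters of $u$ and $v$ lying outside these prefixes (the $\omega_n$-tails) are fixed, except for the flow distortion induced by length changes. As before, I designate the leftmost point of each $\#$-piece as an \emph{anchor}, fix every symbol to the left of which no anchor appears within the preceding $n+1$ flow-steps, and thereby obtain $\hat g \in \mathcal{F}(X)$.

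The verification then follows the template of Theorem~\ref{thm:Veelike}: the map is a bijection on flow orbits because within each maximal $\# \cdots \#$ block it exactly simulates $g$ acting on the encoded pair (with the reversal convention making the two prefix extractions match $\alpha_n$), and it is fixed wherever no $\#$ precedes; faithfulness follows by evaluating on periodic orbits of the form $(\# u @ v)^{\mathbb{Z}}$ with $(u^R,v) \in L$, on which $\hat G$ simulates the $G$-orbit of the encoded pair; and the homomorphism property follows because a composition acting as the identity on $L$ induces, between any two consecutive $\#$-symbols, the symbol-level identity (the pair-language action cancels coordinatewise), leaving only flow distortion, which the anchors let us isotope away exactly as in the one-sided case. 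In our application $X$ will again be a mixing SFT, so one may instead cite \cite[Theorem~3.19]{BoCh17} to conclude that fixing all flow orbits forces isotopy to the identity.

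The main obstacle I expect is purely bookkeeping rather than conceptual: one must check that the two prefix extractions $\alpha_n$ performed on $(u^R, v)$ genuinely correspond to \emph{localized} rewrites near the single interior marker $@$ (and that the reversal on the first coordinate is applied consistently on both input and output), so that the rewrite of one block cannot reach across a $\#$ into a neighbouring block and the anchors really do partition the configuration into independently-isotopable segments. A secondary subtlety is the degenerate case where $u$ or $v$ (or a component of $F$'s output) is empty: exactly as in the one-sided proof, the marker-augmented pieces $a^R @ b$ and their images retain positive length because of the $@$ and $\#$ symbols, so no piece degenerates and the linear stretching remains well-defined.
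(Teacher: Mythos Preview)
Your approach is essentially the paper's, but there is one bookkeeping slip worth flagging. You place the anchor at the leftmost point of each $\#$-piece and then copy the rule ``fix every symbol to the left of which no anchor appears within the preceding $n+1$ flow-steps'' verbatim from Theorem~\ref{thm:Veelike}. In the pair-language setting this rule is inconsistent with your own rewrite: the replacement of $a^R @ b$ is localized near $@$, not near $\#$, and when $|u| > n$ that piece lies more than $n+1$ steps to the right of the preceding $\#$, so your fixing rule would declare it untouched while your rewrite rule changes it. The paper resolves this by taking the midpoint of the $@$-piece as the anchor (the ``special fixed point''): the rewrite reads at most $n$ symbols on each side of $@$, the $@$-symbol itself is never deleted (even when a component of the pair is empty), and everything farther than $n$ from any $@$ is then genuinely fixed. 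With the anchor moved to $@$ your argument goes through exactly as written; the remaining verification (bijectivity on flow orbits, faithfulness via periodic orbits $(\# u @ v)^\Z$, and the isotopy to the identity) is as you describe and matches the paper.
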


\begin{proof}
This is exactly analogous to Theorem~\ref{thm:Veelike}. Near each occurrence of $@$, read the (beginnings of the) words $u_i$ and $v_i$ on the left and right, for $n$ steps or until another $\#$ is reached. Perform the substitution given by the local rule $F$ and interpolate linearly, the midpoint of the $@$-piece taking the role of the special fixed point.
\end{proof}

\begin{theorem}
\label{thm:Vlike2V}
The Brin-Thompson group $2V$ admits a faithful veelike action on the pair language $(\epsilon + (0 + 1)^*1)^2$.
\end{theorem}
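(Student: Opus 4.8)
The plan is to mirror the one-sided argument in Theorem~\ref{thm:VlikeV} exactly, but applied in each of the two coordinates simultaneously. The key is that the pair language $(\epsilon + (0+1)^*1)^2$ is simply the product of two copies of the language $L = \epsilon + (0+1)^*1$ on which $V$ acts veelike, and $2V$ is defined in Definition~\ref{def:2V} as acting on $(\{0,1\}^\N)^2$ by a local rule that reads a length-$n$ prefix in \emph{each} coordinate and rewrites accordingly.

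Let me work through the construction. First I would recall the bijection $\phi : L \to X_0$ from the proof of Theorem~\ref{thm:VlikeV}, where $X_0 = \{x \in \{0,1\}^\N \mid \sum_i x_i < \infty\}$ and $\phi(\epsilon) = 0^{\mathbb{N}}$, $\phi(w) = w0^{\mathbb{N}}$. I would then define $\Phi : (\epsilon + (0+1)^*1)^2 \to X_0 \times X_0$ coordinatewise by $\Phi(w_1, w_2) = (\phi(w_1), \phi(w_2))$, which is a bijection since $\phi$ is. The action of $2V$ on the pair language is then defined by pulling back along $\Phi$: set $f \cdot (w_1, w_2) = \Phi^{-1}(f(\Phi(w_1, w_2)))$. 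This is a genuine action because $\Phi$ conjugates it to the honest action of $2V$ on $(\{0,1\}^\N)^2$ restricted to the invariant set $X_0 \times X_0$.

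Next I would verify that this action is veelike in the sense of the pair-language definition. Given $f \in 2V$, take $n$ and the rewrite functions $F_1, F_2 : (\{0,1\}^n)^2 \to \{0,1\}^*$ from Definition~\ref{def:2V}. If $(w_1, w_2)$ has both $|w_1| \geq n+1$ and $|w_2| \geq n+1$, write $w_1 = u_1 x_1 1$ and $w_2 = u_2 x_2 1$ with $|u_1| = |u_2| = n$; then the same computation as in Theorem~\ref{thm:VlikeV}, carried out in both coordinates at once, gives $f \cdot (w_1, w_2) = (F_1(u_1, u_2) x_1 1,\; F_2(u_1, u_2) x_2 1)$. This is exactly of the form $F(\alpha_n(w_1, w_2)) \cdot \omega_n(w_1, w_2)$ required by the definition, with the local rule reading the length-$n$ prefixes of both words. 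Faithfulness is immediate, exactly as in the one-sided case, since $\Phi$ is a bijection and the $2V$-action on $(\{0,1\}^\N)^2$ is faithful.

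I expect the main obstacle to be purely bookkeeping rather than mathematical: making the pair-language prefix-extraction conventions ($\alpha_n$, $\omega_n$) line up with the two-sided definition of veelike, and correctly handling the boundary cases where one or both of $w_1, w_2$ are shorter than $n+1$ (or empty). In those short cases one must check that $\alpha_n$ still captures the entire word in that coordinate and that the rewrite agrees with the direct computation of $f$ on $\phi$ of a short word; this is the same edge-case reasoning as in Theorem~\ref{thm:VlikeV}, just doubled and with the added subtlety that the two coordinates have \emph{independent} lengths, so one coordinate may be in the ``long'' regime while the other is ``short''. Since the local rule $F$ in Definition~\ref{def:2V} reads both prefixes jointly, I would take care to note that $n$ can be chosen uniformly as the maximum of the two coordinatewise thresholds, so a single $n$ works for the whole pair. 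Once these conventions are aligned, the proof is a direct two-coordinate transcription of Theorem~\ref{thm:VlikeV}.
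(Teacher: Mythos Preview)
Your proposal is correct and follows exactly the same approach as the paper: conjugate the natural $2V$-action on $(\{0,1\}^\N)^2$ through the coordinatewise bijection $\Phi = (\phi,\phi)$ from Theorem~\ref{thm:VlikeV}, then verify the veelike property as in the one-sided case. Your write-up is in fact more detailed than the paper's (which simply says the proof is analogous and names the conjugating bijection); the only superfluous remark is about taking a maximum of two thresholds, since Definition~\ref{def:2V} already supplies a single $n$ governing both coordinates.
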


\begin{proof}
This is exactly analogous to the case of $V$. We simply identify a pair of words $(u, v)$ with $(\phi(u), \phi(v))$, where $\phi : L \to X_0$ is the map from the proof of Theorem~\ref{thm:VlikeV}, and conjugate the natural action of $2V$ through this bijection.
\end{proof}

\section{The main results}

The language where $V$ acts is not just regular, but it is \emph{locally $2$-testable}, meaning the inclusion of a word depends only on its subwords and its prefixes and suffices of length $2$. In this case, the subshift in the statement of Theorem~\ref{thm:Veelike} is clearly of finite type. Now Theorem~\ref{thm:V} is a matter of composing the lemmas. We include a technical fact about the embedding, which is of symbolic dynamical interest. See \cite{BoCh17} for the definition of the Bowen-Franks representation.

\begin{theorem}
\label{thm:V}
Thompson's V embeds in the mapping class group of the vertex shift defined by the matrix $\left[\begin{smallmatrix}
1 & 1 & 0 \\
1 & 1 & 1 \\
1 & 1 & 1 \\
\end{smallmatrix}\right]$. The image of this embedding is in the kernel of the Bowen-Franks representation.
\end{theorem}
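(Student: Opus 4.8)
The plan is to prove this in two independent parts. First, I need to verify that the subshift produced by applying Theorem~\ref{thm:Veelike} to the locally $2$-testable language $L = \epsilon + (0+1)^*1$ is (conjugate to) the vertex shift of the given $3 \times 3$ matrix, and that this vertex shift is a mixing SFT. Second, I must show that every mapping class group element $\hat{g}$ constructed in the proof of Theorem~\ref{thm:Veelike} lies in the kernel of the Bowen--Franks representation.

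For the first part, I would identify the alphabet of the abstract construction. The language $L$ uses symbols $\{0,1\}$ and Theorem~\ref{thm:Veelike} adjoins a separator $\#$; the configurations are bi-infinite concatenations of words $w_i \in L$ separated by $\#$. Since $L = \epsilon + (0+1)^*1$ consists exactly of the empty word together with all words ending in $1$, the admissible two-letter factors are constrained: between consecutive $\#$'s we read a (possibly empty) word ending in $1$, so $\#$ may be followed by $0$, $1$, or $\#$, while $0$ must be followed by $0$ or $1$ (never directly by $\#$, since a word containing $0$ cannot end there), and $1$ may be followed by $0$, $1$, or $\#$. First I would write out this adjacency structure on the three symbols (after identifying $\#$ with one state) and check it against the matrix $\left[\begin{smallmatrix} 1 & 1 & 0 \\ 1 & 1 & 1 \\ 1 & 1 & 1 \end{smallmatrix}\right]$, where the single forbidden transition corresponds precisely to ``$0$ followed by $\#$''; this matches the remark in the excerpt that $X$ is the subshift on $\{0,1,2\}$ with sole forbidden word $02$. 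Mixing and the SFT property are then immediate: the single forbidden length-$2$ word makes it a $1$-step SFT, and irreducibility plus aperiodicity of the matrix (it has a fixed point under the shift and positive powers are strictly positive) give mixing. I would invoke Theorem~\ref{thm:Veelike} and Theorem~\ref{thm:VlikeV} to conclude the embedding of $V$, which is the first sentence of the statement.

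For the second part, I would recall that the Bowen--Franks representation sends a mapping class group element to its induced action on the dimension group (or equivalently its effect on the first-return/cohomology data, as in \cite{BoCh17}), and that an element isotopic to one that merely permutes and distorts flow segments without changing the underlying ``flow-equivalence'' data acts trivially. The key observation is that each $\hat{g}$ fixes every flow orbit setwise and preserves orientation; the only action on points is the orientation-preserving rewriting between consecutive $\#$'s and the linear distortion of the flow. Because $\hat{g}$ induces the identity on the set of flow orbits and preserves the $\#$-structure (the anchors are combinatorially preserved), the induced automorphism of the relevant Bowen--Franks invariant is trivial. Concretely, I would argue that the Bowen--Franks representation factors through the action on the ordered cohomology/dimension group, and since $\hat{g}$ is isotopic to a homeomorphism that is the identity on the suspension up to the flow-direction reparametrization (as established in the isotopy argument of Theorem~\ref{thm:Veelike}, anchored at the $\#$-positions), it acts as the identity on this invariant.

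The main obstacle I anticipate is the second part: making precise why the veelike rewriting, which genuinely moves symbols and changes word lengths (hence distorts the flow via a nontrivial cocycle), nonetheless induces the trivial map under the Bowen--Franks representation. The subtlety is that the Bowen--Franks representation is defined on $\MCG(X)$ and is sensitive to how an element permutes cohomological data, not merely to whether it fixes orbits pointwise; I would need to check that the specific invariant recorded by this representation (the action on $\mathbb{Z}^A / (I - A^T)\mathbb{Z}^A$ or the analogous dimension-group datum in \cite{BoCh17}) is unaffected by length-distorting rewrites that preserve the flow-orbit structure. I expect the cleanest route is to show $\hat{g}$ is, within $\MCG(X)$, represented by an element whose return-time cocycle is cohomologous to the trivial one on each anchored segment, so that the induced map on the Bowen--Franks group is the identity; the anchoring provided by the $\#$-symbols is exactly what localizes and trivializes the cocycle, and I would lean on the characterization in \cite[Theorem~3.19]{BoCh17} to avoid explicit cocycle computations.
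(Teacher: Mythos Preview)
Your first part is correct and matches the paper: identify $\#$ with the symbol $2$, check that the forbidden transition is exactly $0\#$ (i.e.\ $02$), and invoke Theorems~\ref{thm:Veelike} and~\ref{thm:VlikeV}.

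Your second part, however, contains a genuine error. You claim that ``each $\hat{g}$ fixes every flow orbit setwise''. This is false, and necessarily so: if every $\hat{g}$ fixed every flow orbit, then by \cite[Theorem~3.19]{BoCh17} every $\hat{g}$ would be isotopic to the identity and the embedding would not be faithful. In fact the proof of Theorem~\ref{thm:Veelike} uses precisely the opposite: faithfulness is witnessed by the action on the periodic flow orbits with repeating pattern $\# u$, which $\hat{g}$ sends to the (generally different) orbit with pattern $\#(g\cdot u)$. You appear to have conflated the isotopy argument at the end of that proof---which only shows that a \emph{composition} $\hat{g}_n\circ\cdots\circ\hat{g}_1$ is isotopic to the identity \emph{when} $g_n\circ\cdots\circ g_1=\mathrm{id}_L$---with a statement about an individual $\hat{g}$. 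Consequently the entire cocycle/anchor line of reasoning for the Bowen--Franks kernel does not get off the ground.

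The paper's argument for the second part is completely different and purely group-theoretic, making no reference to the specific form of the elements $\hat{g}$. The Bowen--Franks representation is, by definition, a homomorphism from $\MCG(X)$ into the automorphism group of a finitely-generated abelian group. Finitely-generated abelian groups are residually finite, and by a theorem of Baumslag the automorphism group of a finitely-generated residually finite group is itself residually finite. Since Thompson's $V$ is finitely generated, infinite, and simple, any homomorphism from $V$ to a residually finite group has trivial image; hence the image of $V$ in $\MCG(X)$ must lie in the kernel of the Bowen--Franks representation.
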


\begin{proof}
The vertex shift is simply the subshift from Theorem~\ref{thm:Veelike} when applied to the language defined in Theorem~\ref{thm:VlikeV}, if we set $\# = 2$ and columns and rows are indexed with $0,1,2$ in this order. To see that the image is in the kernel of the Bowen-Franks representation, simply observe that it is, by definition, some representation of the mapping class group by automorphisms of a finitely-generated abelian group. Finitely-generated abelian groups are residually finite, and the automorphism group of a finitely-generated residually finite group is residually finite \cite{Ba63}, so the finitely-generated infinite simple group $V$ must have a trivial representation.
\end{proof}

\begin{theorem}
\label{thm:2V}
The Brin-Thompson group $2V$ embeds in the mapping class group of the vertex shift defined by the matrix $\left[\begin{smallmatrix}
      1 & 1 & 1 & 0 & 0 & 0 \\
      1 & 1 & 1 & 0 & 0 & 0 \\
      0 & 0 & 0 & 1 & 1 & 1 \\
      0 & 0 & 0 & 1 & 1 & 0 \\
      0 & 0 & 0 & 1 & 1 & 1 \\
      0 & 1 & 1 & 0 & 0 & 0 \\
\end{smallmatrix}\right]$. The image of this embedding is in the kernel of the Bowen-Franks representation.
\end{theorem}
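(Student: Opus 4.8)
The plan is to run the proof of Theorem~\ref{thm:V} with its one-sided ingredients replaced by the pair-language analogues, and then to reuse the Bowen-Franks argument verbatim. First I would invoke Theorem~\ref{thm:Vlike2V} to obtain a faithful veelike action of $2V$ on $(\epsilon + (0+1)^*1)^2$. Since Theorem~\ref{thm:Veelike2} requires the two coordinate alphabets to be disjoint, I would relabel the second coordinate over a disjoint copy $B = \{0', 1'\}$ of $A = \{0,1\}$, obtaining a faithful veelike action on $L \subset A^* \times B^*$ with $A \cap B = \emptyset$; this relabeling is purely cosmetic and preserves both faithfulness and the veelike property. Theorem~\ref{thm:Veelike2} then yields an embedding of $2V$ into $\MCG(X)$, where $X \subset (A \cup B \cup \{\#, @\})^\Z$ is the smallest subshift containing the configurations $\ldots u_{-1} @ v_{-1} \# u_0 @ v_0 \# \ldots$ with $(u_i^R, v_i) \in L$.

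The substantive step is to check that $X$ is exactly the named vertex shift. As in the discussion preceding Theorem~\ref{thm:V}, each factor of $L$ is locally $2$-testable, and the interleaving with the disjoint markers $@$ and $\#$ keeps the whole structure determined by length-$2$ subwords, so $X$ is a vertex shift and it suffices to identify the allowed $2$-blocks. I would use the dictionary $0 \mapsto 1$, $1 \mapsto 2$, $@ \mapsto 3$, $0' \mapsto 4$, $1' \mapsto 5$, $\# \mapsto 6$, reading the six matrix indices in this order. The constraint $u_i^R \in \epsilon + (0+1)^*1$ says each $u_i$ is empty or \emph{begins} with $1$ (so $\#$ may be followed only by $1$ or $@$, never by $0$), while the constraint $v_i \in \epsilon + (0+1)^*1$ says each $v_i$ is empty or \emph{ends} in $1'$ (so only $1'$ or $@$ may precede $\#$, never $0'$); there is no constraint on the interiors, and the alphabet disjointness forces the macroscopic shape $\# A^* @ B^* \#$. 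Reading off the successors then gives $0,1 \to \{0,1,@\}$, $@ \to \{0',1',\#\}$, $0' \to \{0',1'\}$, $1' \to \{0',1',\#\}$, and $\# \to \{1,@\}$. Under the dictionary above these are precisely the rows of the stated matrix, completing the identification of $X$.

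For the Bowen-Franks claim I would repeat the argument of Theorem~\ref{thm:V} word for word. The Bowen-Franks representation is a representation of $\MCG(X)$ by automorphisms of a finitely generated abelian group, which is residually finite, and by Baumslag's theorem \cite{Ba63} the automorphism group of a finitely generated residually finite group is again residually finite. Since $2V$ is finitely generated, infinite and simple \cite{Br04a}, its image in this residually finite group must be trivial, so the embedding lands in the kernel.

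I expect the only real obstacle to be bookkeeping: fixing the symbol-to-index dictionary and checking all the transitions, with particular care for the asymmetry introduced by the reversal. On the $u$-side the ``ends in $1$'' condition becomes a \emph{left} constraint forbidding $\#0$, whereas on the unreversed $v$-side it is a \emph{right} constraint forbidding $0'\#$; keeping these straight is what produces the slightly asymmetric rows $4$ and $5$ of the matrix. Everything else is a direct transcription of the single-coordinate case already handled in Theorems~\ref{thm:Veelike} and~\ref{thm:VlikeV}.
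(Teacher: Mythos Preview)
Your proposal is correct and follows essentially the same approach as the paper: relabel the two coordinates to disjoint alphabets, apply Theorem~\ref{thm:Veelike2}, identify the resulting subshift with the stated vertex shift via the same symbol ordering $0_A,1_A,@,0_B,1_B,\#$, and reuse the simplicity/residual-finiteness argument for the Bowen-Franks kernel claim. You have in fact spelled out the transition check (including the left/right asymmetry from the reversal) in more detail than the paper does.
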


\begin{proof}
This exactly analogous to the case of $V$. Rename the symbols $0,1$ in the leftmost component of $(\epsilon + (0 + 1)^*1)^2$ from Theorem~\ref{thm:Vlike2V} to $0_A, 1_A$ and the ones in the rightmost component to $0_B, 1_B$. Now taking the dimensions to represent $0_A, 1_A, @, 0_B, 1_B, \#$ in this order, the subshift in Theorem~\ref{thm:Veelike2} is the vertex shift corresponding to this matrix. Again the fact the image is in the kernel of the Bowen-Franks representation is automatic, since $2V$ is a finitely-generated infinite simple group.
\end{proof}

The symbol $\#$ is not really used, but we feel this additional separator clarifies the situation. The separator $@$ inside a word pair is important, since the anchor needs to be inside a symbol that is never removed in situations where either the leftmost or rightmost word becomes empty, for linear stretching to be meaningful.

\section{Discussion and questions}

Given a language $L \subset A^*$, one can consider the \emph{veelike group of $L$}, the largest group that faithfully acts veelike on it, namely the group of all bijections $g : A^* \to A^*$ such that there exists $n \in \N$ such that for all $u \in A^*$ with $|u| = n$ there exists $u' \in A^*$ such that $guv=u'v$ for all $uv \in L$.

Thompson's $V$ embeds in the veelike group of $(0+1)^*1$ by Theorem~\ref{thm:VlikeV} but this embedding is not surjective onto the veelike group, since for example the involution mapping $\epsilon \leftrightarrow 1$ (and fixing other words) does not come from Thompson's $V$. It does not seem easy to determine the soficity of the veelike group on any regular language of exponential growth, in particular that of $A^*$ seems closely related to Thompson's $V$.

\begin{theorem}
The mapping class group of every positive entropy sofic shift $X$ contains the veelike group of $A^*$, for every finite alphabet $A$.
\end{theorem}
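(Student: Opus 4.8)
The plan is to transport the construction behind Theorem~\ref{thm:Veelike} into an arbitrary positive entropy sofic shift by encoding. Write $G$ for the veelike group of $A^*$ and $s = |A|$. By definition $G$ acts faithfully veelike on $A^*$ (the tautological action, each element being a bijection of $A^*$ with the required local rule), so Theorem~\ref{thm:Veelike} already embeds $G$ into $\MCG(Y)$, where $Y$ is the subshift built from $L = A^*$; since arbitrary words over $A$ are allowed between markers, $Y$ is simply the full shift on the alphabet $A \cup \{\#\}$. The task is therefore to realize the same combinatorial construction, with its markers and freely varying content, inside the given shift $X$.

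First I would extract the required symbolic structure from $X$. A positive entropy sofic shift has an irreducible sofic component of positive entropy, and I work inside it; such a shift admits a synchronizing word $m$, meaning that whenever $um$ and $mv$ lie in the language, so does $umv$. Positive entropy forces the number of words that can appear between two consecutive occurrences of $m$ to grow exponentially, so for the fixed $s$ I can select equal-length words $a_0, \ldots, a_{s-1}$ such that every sequence $m\, a_{i_1}\, m\, a_{i_2}\, m \cdots$ is legal for all choices of indices. Using a marker lemma I would moreover arrange that $m$ occurs only at the intended block boundaries, never spuriously inside an $a_j$ nor straddling two of them. This yields a locally recognizable copy of the patterns of $Y$ inside $X$ under the encoding $\# \mapsto m$ and $i \mapsto a_i$, with $m$ playing the role of both separator and synchronizing point.

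With this encoding fixed, I would mimic the proof of Theorem~\ref{thm:Veelike} essentially verbatim on $X$. For $g \in G$ with local rule $F : A^n \to A^*$, define $\hat g \in \mathcal{F}(X)$ by reading, near each occurrence of $m$, the encoded word up to the next marker (for up to $n$ code-blocks), applying the substitution dictated by $F$ to the $a_j$-blocks and interpolating the flow linearly, with the midpoint of the $m$-piece serving as the anchor. All content not reached by any such rewrite, and in particular every configuration that is not of the encoded form, is fixed. The synchronizing property of $m$ is exactly what makes this rule well-defined and continuous, independent of distant context, and a bijection on flow orbits. The assignment $g \mapsto \hat g$ is then a homomorphism and is faithful for the same reasons as in Theorem~\ref{thm:Veelike}: on the periodic orbit encoding a word $u \in A^*$ the action of $\hat g$ reproduces the orbit of $u$ under $G$, and the anchors supply an isotopy to the identity whenever a composite acts trivially on all orbits.

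The main obstacle is the symbolic-dynamics input of the second paragraph: producing a single synchronizing marker $m$ together with $s$ freely concatenable equal-length code words that $m$ never interrupts, uniformly for the given finite alphabet. The exponential growth coming from positive entropy and the existence of synchronizing words in irreducible sofic shifts (see \cite{LiMa95}) make each ingredient standard, but combining them into a clean marker-and-code scheme, and then checking that the resulting rewrites are genuine self-homeomorphisms of the mapping torus at the interface between encoded and non-encoded regions, is where the real work lies. Once that scheme is in place, the mapping class group argument is identical to that of Theorem~\ref{thm:Veelike}.
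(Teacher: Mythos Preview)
Your plan is essentially the paper's: encode the alphabet $A\cup\{\#\}$ by blocks in $X$ and rerun the construction of Theorem~\ref{thm:Veelike} on coded configurations, leaving everything else fixed. The paper chooses mutually unbordered words $u_\#,u_a$ ($a\in A$) of a common length that all map to the same element of the syntactic monoid of $X$, so that arbitrary concatenations remain in the language and no spurious overlaps occur; your synchronizing-word route is a close cousin of this.

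There is one concrete glitch in your encoding. You arrange that every pattern $m\,a_{i_1}\,m\,a_{i_2}\,m\cdots$ is legal and then declare $\#\mapsto m$, $i\mapsto a_i$. But a $Y$-pattern such as $\#\,0\,1\,\#$ then encodes to $m\,a_0\,a_1\,m$, and your hypothesis only guarantees a \emph{single} $a_i$ between consecutive $m$'s, not concatenations $a_{i_1}\cdots a_{i_k}$. The fix is easy: pick $s{+}1$ filler blocks and set $u_i=m a_i$, $u_\#=m a_s$, so that the code words themselves are freely concatenable (this is exactly what the syntactic-monoid condition buys in the paper's version).

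On the interface issue you flag as ``where the real work lies'', the paper's resolution is brief: when the coding breaks on the left of a marker block, do not modify; when it breaks on the right of a coded stretch, act as if the breaking point begins another $u_\#$-block and apply the local rule accordingly. With that convention the map is a well-defined element of $\mathcal F(X)$, and the rest of your argument (faithfulness via periodic orbits, homomorphism, isotopy anchored at markers) goes through as in Theorem~\ref{thm:Veelike}.
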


\begin{proof}
Pick mutually unbordered words $u_\#$ and $u_a$ for $a \in A$ in the language of $X$, all of the same length, so that each of these words maps to the same element of the syntactic monoid, and $u_\#^*$ (thus any concatenation of the words $u_i$) is contained in the language of $X$. The existence of such a set of words follows by standard arguments. The proof of Theorem~\ref{thm:Veelike} goes through almost directly, pretending the word $u_i$ is the corresponding letter $i$, and fixing the contents of a configuration when no $u_i$-word is nearby. There are some subtleties when the coding breaks, i.e.\ when a concatenation of $u_i$s is followed on the left or right by something other than another word $u_j \in U$. If the coding breaks on the left, we simply do not modify the configuration. If the coding breaks on the right, then we act as if this breaking point begins another $u_\#$-word.
\end{proof}

Due to the above discussion, although we do not know how to embed $V$, it seems likely that the soficity of the mapping class of any positive entropy sofic shift is difficult to determine.

We suspect that the veelike group of every regular language of polynomial growth is sofic, and that the mapping class group of every countable sofic shift is sofic (the automorphism group of every countable sofic shift, as a topological dynamical system, is known to be amenable).

\begin{question}
Does Thompson's $V$ embed in the full veelike group of $A^*$ for a finite alphabet $A$?
\end{question}

One can ask whether the Brin-Thompson group $2V$ (or higher groups $nV$) embeds in the veelike group of $A^*$ for a finite alphabet $A$. One can of course formulate a natural notion of the veelike group of a pair language (or even an $n$-tuple language), and we do not know whether $nV$ can act faithfully on a $k$-tuple language for $k < n$.

\begin{question}
Does Thompson's $V$ embed in the mapping class group of a full shift? Does the Brin-Thompson $2V$?
\end{question} 

\begin{question}
Does the higher-dimensional Thompson groups $3V$ embed in the mapping class group of a mixing SFT? Higher $nV$?
\end{question}

\section*{Acknowledgements}

We thank Mike Boyle for several useful comments and corrections on a preliminary draft. The author was supported by Academy of Finland project 2608073211.

\bibliographystyle{plain}
\bibliography{../../../bib/bib}{}

\end{document}